\newcommand{\alp}{\alpha}
\newcommand{\sig}{\sigma}
\newcommand{\Sig}{\Sigma}
\newcommand{\longc}{,\dotsc,}
\newcommand{\longp}{+\dotsb+}
\newcommand{\longe}{=\dotsb=}
\newcommand{\seq}{\subseteq}
\newcommand{\stm}{\setminus}
\newcommand{\est}{\varnothing}
\renewcommand{\>}{\rangle}
\newcommand{\lfl}{\left\lfloor}
\newcommand{\rfl}{\right\rfloor}
\newtheorem{lemma}{Lemma}
\newtheorem{theorem}{Theorem}
\newtheorem{corollary}{Corollary}
\theoremstyle{remark}
\newtheorem{remark}{Remark}
\newcommand{\refc}[1]{\ref{c:#1}}
\newcommand{\refl}[1]{\ref{l:#1}}
\newcommand{\reft}[1]{\ref{t:#1}}
\newcommand{\refs}[1]{\ref{s:#1}}
\newcommand{\refe}[1]{\eqref{e:#1}}
\newcommand{\refb}[1]{\cite{b:#1}}
\DeclareMathOperator{\ord}{ord}
\DeclareMathOperator{\supp}{supp}
\newcommand{\Su}[1][\alp]{\Sig(#1)}
\newcommand{\Suc}[1][\alp]{\Sig^{\scriptscriptstyle{\times}}(#1)}
\newcommand{\h}{{\mathsf H}}
\title[The number of subsequence sums]%
  {A nonlinear bound \\ for the number of subsequence sums}
\author{Vsevolod F. Lev}
\email{seva@math.haifa.ac.il}
\address{Department of Mathematics, The University of Haifa at Oranim,
  Tivon 36006, Israel}
\subjclass[2020]{Primary 11P70; Secondary 11B13, 11B75}
\keywords{Zero-sum-free sequences, Minimal zero-sum sequences, Subset sums,
  Inverse zero-sum problems, Hilbert cube}
\begin{document}
\baselineskip=16pt

\begin{abstract}
We show that a finite zero-sum-free sequence $\alp$ over an abelian group
has at least $c|\alp|^{4/3}$ distinct subsequence sums, unless $\alp$ is
``controlled'' by a small number of its terms; here $|\alp|$ denotes the
number of terms of $\alp$, and $c>0$ is an absolute constant.
\end{abstract}

\maketitle

\section{Background and motivation}

A classical object of study of the combinatorial number theory is the
Davenport constant of a finite abelian group, defined to be the smallest
integer $n$ such that every $n$-term sequence of group elements is guaranteed
to have a nonempty subsequence with the zero sum of its terms. According to
Olson~\refb{o1}, the problem of finding this group invariant was raised by
Davenport at the Midwestern Conference on Group Theory and Number Theory held
in the Ohio State University in April 1966, in connection with the
investigation of the class groups of the algebraic number fields. However,
this exact problem has been considered three years earlier by Rogers~\refb{r}
who, in turn, refers to a personal communication with C.~Sudler. Other early
papers where this or related problems are studied include
\cite{b:vebk,b:b,b:veb,b:o2}.

The precise value of the Davenport constant is still unknown in general,
despite a large number of partial results. Eggleton and Erd\H os
\cite[Theorem~3]{b:ee} have shown that if the underlying group $G$ is not
cyclic, then its Davenport constant does not exceed $(|G|+1)/2$; that is, if
$\alp$ is a finite sequence free of nonempty zero-sum subsequences, then its
length satisfies $|\alp|\le |G|/2$. This was improved by Olson and
White~\refb{ow} who proved that if $\alp$ is free of nonempty zero-sum
subsequences, then it has at least $2|\alp|$ distinct subsequence sums
provided that the subgroup generated by the elements of $\alp$ is not cyclic;
here the empty subsequence with the zero sum is counted, too. Indeed, the
result of Olson-White applies to infinite and nonabelian groups as well.

Evidently, the reason for Eggleton-Erd\H os and Olson-White to ignore the
cyclic groups is that for these groups the Davenport constant is easily seen
to be equal to the order of the group. This simple fact suggests,
nevertheless, an interesting research avenue to explore: what is the
structure of ``long'' sequences free of nonempty zero-sum subsequences in the
finite cyclic groups?

We introduce some basic terminology to proceed.

A finite sequence of elements of an abelian group is called
\emph{zero-sum-free} if all of its nonempty subsequences have a nonzero sum
of their terms; the sequence is \emph{minimal zero-sum} if the sum of all of
its terms is zero, while all of its nonempty proper subsequences have nonzero
term sum. The two classes of sequences are actually quite close to each other
due to the observation that removing any term from a minimal zero-sum
sequence results in a zero-sum-free sequence, and conversely, any
zero-sum-free sequence can be turned into a minimal zero-sum sequence by
appending an appropriate group element to it.

Suppose that the underlying group $G$ is cyclic of finite order. Addressing
the above-mentioned problem, Gao~\refb{g} characterized minimal zero-sum
sequences $\alp$ of length $n:=|\alp|>2|G|/3+O(1)$; specifically, for any
such sequence there is a generating element $g\in G$ and positive integers
$x_1\longc x_n$ with $x_1\longp x_n=|G|$ such that the $n$ terms of $\alp$
are $x_1g\longc x_ng$. Savchev and Chen~\refb{sc2} and, independently,
Yuan~\refb{y} have shown that, indeed, Gao's characterization stays true for
all zero-sum-free sequences of length $|\alp|>|G|/2$. In~\refb{l}, these
results are extended the same way the theorem of Olson-White extends the
result of Eggleton-Erd\H os; namely, \cite[Theorem~2]{b:l} shows that the
assumption $|\alp|>|G|/2$ can be relaxed to $|\alp|>|\Su|/2$, where $\Su$ is
the set of all subsequence sums of $\alp$, including the empty subsequence.

A significant further progress was made by Savchev and Chen~\refb{sc3} who
have introduced a subtle and involved argument to classify minimal zero-sum
sequences $\alp$ in a finite cyclic group $G$, given that $|\alp|\ge\lfl
|G|/3\rfl+3$. As shown in~\refb{sc3}, under this assumption there exist group
elements $u\ne 0$ and $v$ satisfying a number of conditions such that all
terms or $\alp$ are contained either in the subgroup $\<u\>$ generated by
$u$, or in the coset $v+\<u\>$. We omit the (somewhat technical) exact
statement of this result referring the interested reader to~\refb{sc3}
instead.

\section{The main result}\label{s:main}

In this section we use the notation which will not be introduced formally
until Section~\refs{notation}. We believe, however, that our notation is
standard enough to not cause any discomfort to the reader. Nevertheless, it
may be worth remarking that $\Su$ denotes the set of \emph{all} subsequence
sums of the sequence $\alp$, including the empty subsequence, while $\Suc$ is
defined the same way, except that the empty subsequence is ignored; say, if
$\alp$ is the sequence of integers containing one term equal to $-3$ and two
terms equal to $2$, then $\Su=\{-3,-1,0,1,2,4\}$, while
$\Suc=\{-3,-1,1,2,4\}$. By $\supp(\alp)$ we denote the set of all terms of
$\alp$, and by $|\alp|$ the length (that is, the number of terms) of $\alp$.

The aim of the present paper is to go beyond the Savchev-Chen bound
 $|\alp|\ge\lfl |G|/3\rfl+3$.
\begin{theorem}\label{t:main}
Suppose that $C\ge 2$ is an integer, and $\alp$ is a zero-sum-free sequence
of length $n:=|\alp|\ge(4C)^3$ over an abelian group. If
$|\Su|<Cn-C^2\sqrt{6n}$, then there are an integer $h>n-C\sqrt{6n}$ and an
element $a\in\supp(\alp)$ of order $\ord(a)>h+1$ such that, letting
$P:=\{a,2a,\longc ha\}$, we have $\Su[\supp(\alp)]+P\seq\Suc$.

Moreover, there is a set $X\seq\Su[\supp(\alp)]$ with $0\in X$ and $|X|\le
C-1$ such that $\Su[\supp(\alp)]\seq X+P-P$; in particular, each term of
$\alp$ can be written as $x+ta$ with an element $x\in X$ and an integer
$t\in[-(h-1),h-1]$.
%
\end{theorem}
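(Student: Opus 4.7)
My plan is to identify the dominant element $a$ directly from its multiplicity, reduce the first conclusion to a numerical estimate on $|\beta|$, and extract the coset structure $X$ by passing to the quotient $G/\<a\>$. Let $a\in\supp(\alp)$ be a term of maximum multiplicity $m$, let $\beta$ be the subsequence of $\alp$ obtained by removing all $m$ copies of $a$ (so $|\beta|=n-m$), and set $h:=m-1$ and $P:=\{a,2a\longc ha\}$. Zero-sum-freeness of $\alp$ immediately gives $ka\ne 0$ for $k=1\longc m$, so $\ord(a)>m=h+1$. For every $S\seq\supp(\alp)$ and every $j\in\{1\longc h\}$, the subsequence of $\alp$ consisting of the elements of $S$ together with $j$ copies of $a$ (or $j+1$ copies if $a\in S$) is valid and nonempty, uses at most $h+1=m$ copies of $a$, and has sum $\sigma(S)+ja$, where $\sigma(S)$ denotes the sum of $S$. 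Hence $\Su[\supp(\alp)]+P\seq\Suc$ holds automatically, and the first conclusion reduces to the estimate $m>n-C\sqrt{6n}+1$, equivalently $|\beta|<C\sqrt{6n}-1$.

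For this estimate I argue by contradiction, assuming $|\beta|\ge C\sqrt{6n}-1$. The identity
\[
\Su=\Sigma(\beta)+\{0,a,2a\longc ma\},
\]
combined with the elementary bound $|A+B|\ge|A|+|B|-1$ applied with $B=\{0,a\longc ma\}$ (of size $m+1$, since $\ord(a)>m$), yields $|\Su|\ge|\Sigma(\beta)|+m$. To contradict $|\Su|<Cn-C^{2}\sqrt{6n}$ it therefore suffices to establish the sharp lower bound
\[
|\Sigma(\beta)|\ge Cn-m-C^{2}\sqrt{6n},
\]
which is the heart of the proof.

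I would establish this bound by a direct Freiman-- and Pl\"unnecke-type analysis of the iterated sumset
\[
\Sigma(\beta)=P_1\longp P_s,
\]
where $s:=|\supp(\beta)|$ and $P_i:=\{0,b_i,2b_i\longc m_ib_i\}$ are progressions corresponding to the distinct terms $b_1\longc b_s$ of $\beta$ with multiplicities satisfying $m_1\longp m_s=|\beta|$. A natural dichotomy arises: either $s$ is large -- in which case selecting $s$ pairwise distinct terms yields the quadratic bound $|\Sigma(\beta)|\ge s(s+1)/2+1$ for subset sums of distinct zero-sum-free elements, and $|\beta|\ge C\sqrt{6n}$ with $s\gtrsim\sqrt{n}$ delivers the required estimate -- or $s$ is small, in which case $\beta$ is itself dominated by a single element $b$, and the analysis recurses into a two-direction structure. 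For $C=2$ the non-cyclic case is handled directly by the Olson--White bound $|\Su|\ge 2n$; for larger $C$ the non-cyclic case gives sharper lower bounds via products of progressions in independent directions. In all cases the peeling yields at most $C-1$ secondary directions whose cosets modulo $\<a\>$ will assemble into the set $X$.

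For the moreover part, pass to $G/\<a\>$: each coset of $\<a\>$ meeting $\Su[\supp(\alp)]$ contains some $\sigma(S)$ with $S\seq\supp(\alp)$, and the progression $\{\sigma(S)+ja:0\le j\le h\}$ consists of $h+1$ distinct elements of $\Su$, so
\[
|\Su|\ge(h+1)\,r,
\]
where $r$ counts these cosets. Combined with $|\Su|<Cn-C^{2}\sqrt{6n}$ and $h+1>n-C\sqrt{6n}+1$, this forces $r\le C-1$. Taking $X\seq\Su[\supp(\alp)]$ as a suitably chosen system of coset representatives (with $0\in X$ from the trivial coset $\<a\>$), together with a supplementary argument -- using the smallness of $\Sigma(\beta)$ -- that within each coset the elements of $\Su[\supp(\alp)]$ all lie within $(h-1)a$ of the chosen representative, yields $\Su[\supp(\alp)]\seq X+P-P$ as claimed. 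The main obstacle is the sharp inverse lower bound for $|\Sigma(\beta)|$: matching the explicit error term $C^{2}\sqrt{6n}$ demands a delicate iterated sumset analysis in cyclic groups and precise control over the cases in the dichotomy above.
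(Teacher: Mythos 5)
Your opening move --- taking $a$ to be a term of maximum multiplicity $m$ in $\alp$ itself, setting $h:=m-1$, and reducing the first conclusion to the estimate $m>n-C\sqrt{6n}+1$ (equivalently $|\beta|<C\sqrt{6n}-1$) --- fails, because that estimate is false under the hypotheses of the theorem. Consider $\alp:=(g\cdot 2g\dotsb sg)^k$ with $s:=2(C-1)$ and $g$ of sufficiently large order (this is precisely the example the paper gives after the statement of Lemma~\refl{bounds}). Here $n=ks$, every term has multiplicity $m=k=n/(2C-2)$, and $|\Su|=\tfrac12(s+1)n+1=(C-\tfrac12)n+1$, which is below $Cn-C^2\sqrt{6n}$ once $n\gtrsim 24C^4$; yet $m$ is nowhere near $n-C\sqrt{6n}$. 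Consequently the ``heart of the proof'', the inequality $|\Sig(\beta)|\ge Cn-m-C^2\sqrt{6n}$, is also false for this sequence. The conclusion of the theorem does hold for it, but with an $h$ far exceeding the maximal multiplicity of $\alp$: the long progression $P=\{a,2a\longc ha\}$ inside $\Suc$ arises from the joint contribution of all the terms $g,2g\longc sg$, not from repeatedly appending copies of a single high-multiplicity term. This is exactly the phenomenon the paper's \emph{transfer operation} is built to handle: one first replaces $\alp$ by a stable sequence $\alp'$ with $\supp(\alp')=\supp(\alp)$, $|\alp'|\ge|\alp|$ and $\Suc[\alp']\seq\Suc$, and only for the \emph{stable} sequence does one prove (Lemma~\refl{bounds}) the existence of a term of multiplicity exceeding $n-C\sqrt{6n}+1$. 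Your proposal contains no substitute for this step, and without it the element $a$ and the integer $h$ you construct simply do not satisfy the theorem's requirements.

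Two further points. First, even granting your reduction, the claimed lower bound for $|\Sig(\beta)|$ is only gestured at via a ``Freiman-- and Pl\"unnecke-type analysis'' with an unspecified dichotomy on $s=|\supp(\beta)|$ and an unspecified recursion; no argument is actually supplied, so the central estimate is unproved on its own terms. Second, your treatment of the ``moreover'' part (counting the cosets of $\<a\>$ that meet $\Su[\supp(\alp)]$ and deducing $r\le C-1$ from $|\Su|\ge(h+1)r$) is close in spirit to the paper's Ruzsa-covering argument and would be salvageable once a correct pair $(a,h)$ is in hand; note, however, that the paper's version --- taking $X$ maximal with the translates $x+P$ pairwise disjoint --- yields $\Su[\supp(\alp)]\seq X+P-P$ immediately from maximality, whereas your coset version still needs the supplementary ``each coset element lies within $(h-1)a$ of its representative'' claim, which you leave unproved and which does not obviously follow from anything you have established.
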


\begin{remark}
The arithmetic progression $P-P=\{-(h-1)a\longc(h-1)a\}$ can be considered as
a ``local approximation'' to the subgroup $\<a\>$ generated by $a$. The
assertion of the theorem remains true, but becomes weaker with $P-P$ replaced
by $\<a\>$.
\end{remark}

\begin{remark}
The translates $x+P-P$ with $x\in X$ are not guaranteed to be pairwise
disjoint, but it is immediate from the proof that the translates $x+P$ are
disjoint.
\end{remark}

\begin{remark}
The bound $|X|\le C-1$ is best possible. To see this, fix a positive integer
$s$, set $C:=2^s+1$, choose linearly independent vectors $e_0,e_1\longc e_s$
in a vector space over a field of characteristic $0$, and consider the
sequence $\alp=e_0^{n-s}e_1\dotsb e_s$. We have
  $$ \Su=\{x_1e_1\longp x_se_s+te_0\colon x_1\longc x_s\in\{0,1\},
                                                   \ 0\le t\le n-s \} $$
whence
  $$ |\Su| = 2^s(n-s+1)=(C-1)n-(C-1)(s-1) < Cn - C^2\sqrt{6n} $$
provided that $n$ is sufficiently large. On the other hand, one cannot find
an arithmetic progression $P$ and a set $X$ of size $|X|\le C-2=2^s-1$ such
that $\Su[\supp(\alp)]\seq X+P$ since the right-hand side is contained in a
system of $|X|$ lines, each of them covering at most two vertices of the
$(s+1)$-dimensional cube
 $\Su[e_0e_1\dotsb e_s]=\Su[\supp(\alp)]$.
\end{remark}

\begin{remark}
The assumptions $n\ge(4C)^3$ and $|\Su|<Cn-C^2\sqrt{6n}$ essentially mean
that our result applies to zero-sum-free sequences satisfying
$|\Su|<\frac14|\alp|^{4/3}$. Conjecturally, the former assumption can be
dropped or at least relaxed very significantly, and the latter can be
replaced with the weaker $|\Su|\le Cn-(C-1)^2$. If true, this is best
possible as it follows by considering the sequence
$\alp:=g_1^{n-C+1}g_2^{C-1}$, where $g_1$ and $g_2$ are independent group
elements; in this example (originating from~\cite[Examples~1 and~2]{b:l0}) we
have $|\Su|=Cn-(C-1)^2+1$, while $\Su$ is not a union of $C-1$ or less
arithmetic progressions with the same difference.

Some minor adjustments of the constants can be obtained by fine-tuning our
calculations, but we prefer instead to keep the argument clean and reasonably
simple.
\end{remark}

\begin{remark}
Theorem~\reft{main} markedly lacks the precision of the result of Savchev and
Chen, the latter establishing a necessary and sufficient condition for $\alp$
to be minimal zero-sum under the assumption
 $|\alp|\ge\lfl|G|/3\rfl+3$, where $G$ is the underlying group. On the other
hand, Theorem~\reft{main} is not restricted to the coefficient $C=3$,
allowing $C$ to be \emph{any} positive integer. Also, the assumption
$|\Su|<C(1+o(1))|\alp|$ is substantially weaker than the assumption
$|\alp|>(C^{-1}+o(1))|G|$. Finally, our result applies to any abelian group,
not necessarily finite or cyclic. As explained in~\refb{sc3}, the extension
from the finite cyclic to arbitrary abelian groups is meaningless in the
situation where $|\alp|\ge\lfl |G|/3\rfl+3$; however, it makes perfect sense
in the settings of Theorem~\reft{main}.
\end{remark}

We now briefly outline the idea behind the proof.

The crucial role is played by the transfer operation introduced in
Section~\refs{stab}. The operation modifies a given sequence $\alp$ so that,
in particular, the subsequence sum set $\Su$ does not increase, while the
length $|\alp|$ either increases, or stays the same, but in the latter case
the sequence $\alp$ becomes, loosely speaking, ``more concentrated''.
Moreover, the support $\supp(\alp)$ stays unchanged, and the property of
being zero-sum-free is preserved. Having applied the transfer operation
sufficiently many times, we finally reach a zero-sum-free sequence which is
stable under further applications of the operation. This resulting stable
sequence turns out to contain a term of very large multiplicity, making it
easier to analyze than a ``generic'' zero-sum-free sequence, and its relation
to the original sequence $\alp$ is strong enough to read off the properties
of the latter from those of the former.

We introduce the notation used in the next section. Three basic results
needed for the proof of Theorem~\reft{main} are quoted in Section~\refs{aux}.
In Section~\refs{stab} we define the transfer operation and study its
properties. Theorem~\reft{main} is proved in the concluding
Section~\refs{proof}.

\section{Notation}\label{s:notation}

From now on, by a \emph{sequence} we mean a finite sequence of elements of an
abelian group. Informally, a sequence is an unordered list of the group
elements, with repetitions allowed. Formally, sequences are elements of the
abelian monoid freely generated by the elements of the group. We use
multiplicative notation for the monoid operation; thus, for instance, the
``concatenation'' of the sequences $\alp_1\longc\alp_k$ is the product
sequence $\alp_1\dotsb\alp_k$.

Up to the order of the factors, every sequence $\alp$ can be uniquely written
in the form $\alp=a_1^{m_1}\dotsb a_s^{m_s}$ where $s\ge 0$ is an integer,
$a_1\longc a_s$ are pairwise distinct group elements, and $m_1\longc m_s$ are
positive integers. The elements $a_i$ are called the \emph{terms} of $\alp$,
and the integers $m_i$ are the \emph{multiplicities} of the corresponding
terms. Alternatively, the terms of $\alp$ are group elements dividing $\alp$,
and the multiplicity of a group element $a$ in $\alp$, denoted $\nu_\alp(a)$
below, is the largest integer $m\ge0$ such that $a^m\mid\alp$.

For a nonempty sequence $\alp$, by $\h(\alp)$ we denote the largest
multiplicity of a term of $\alp$: that is,
$\h(\alp)=\max\{\nu_{\alp}(a)\colon a\mid\alp\}$. More generally, by
$\h_k(\alp)$ we denote the $k$th largest multiplicity of a term of $\alp$;
thus, $\h(\alp)=\h_1(\alp)$. By convention, $\h_k(\est)=0$ for all $k\ge 1$.

The \emph{length} of $\alp$ is defined by $|\alp|:=m_1\longp m_s$. We also
let $|\alp|_2:={m_1}^2\longp m_s^2$.

The set $\{a_1\longc a_s\}$ is called the \emph{support} of $\alp$ and
denoted $\supp(\alp)$; therefore, $s=|\supp(\alp)|$, and for a group element
$a$, we have $a\mid\alp$ if and only if $a\in\supp(\alp)$, meaning that $a$
is a term of $\alp$.

\emph{Subsequences} of $\alp$ are sequences $\alp'$ with $\alp'\mid\alp$.
Writing $\alp=a_1^{m_1}\dotsb a_s^{m_s}$ with $s,a_i$, and $m_i$ as above,
$\alp'$ is a subsequence of $\alp$ if and only if $\alp'=a_1^{m_1'}\dotsb
a_s^{m_s'}$ where $0\le m_i'\le m_i$ for all $i=1\longc s$.

We denote by $\sig(\alp)$ the sum of all terms of $\alp$ with multiplicities
counted: $\sig(\alp) = m_1a_1\longp m_sa_s$.

The subsequence sum set of $\alp$ is the set
  $$ \Su := \{ \sig(\alp') \colon \alp'\mid\alp \}. $$
We also write
  $$ \Suc := \{ \sig(\alp') \colon \alp'\mid\alp,\ \alp'\ne\est \}; $$
thus, $\Suc[\est]=\est$ and $\Su=\Suc\cup\{0\}$. The sequence $\alp$ is
\emph{zero-sum-free} if it does not have a nonempty zero-sum subsequence;
that is, if $0\notin\Suc$.

For subsets $A_1\longc A_k$ of an abelian group, we let
  $$ A_1\longp A_k
                := \{ a_1\longp a_k \colon a_1\in A_1\longc a_k\in A_k \} $$
and
  $$ A_i-A_j := \{ a_i-a_j \colon a_i\in A_i,\ a_j\in A_j \}. $$

The subgroup generated by a group element $g$ is denoted by $\<g\>$, and the
order of $g$ by $\ord(g)$.

\section{Basic Facts}\label{s:aux}

We need the following, well-known, lower-bound estimates for the size of the
subsequence sum set of a zero-sum-free sequence.

\begin{lemma}[{\cite[Theorem 2.5]{b:sf}};
  see also {\cite[Proposition 5.3.5]{b:gh}}]\label{l:sf}
If $\alp$ is a finite zero-sum-free sequence over an abelian group, then
  $$ |\Su| \ge 2|\alp| - \h(\alp) + 1 \ge |\alp| + |\supp(\alp)|. $$
\end{lemma}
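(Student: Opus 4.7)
My plan is to dispose of the second inequality quickly and attack the main bound by induction on $n := |\alp|$. For the elementary half, writing $\alp = a_1^{m_1}\dotsb a_s^{m_s}$ with $m_1 = \h(\alp)$ and $s = |\supp(\alp)|$, every one of the $s - 1$ non-heavy terms contributes at least $1$, whence $|\alp| = m_1 \longp m_s \ge \h(\alp) + (s - 1)$; rearranging gives $2|\alp| - \h(\alp) + 1 \ge |\alp| + s$, as required.

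For the left inequality I would induct on $n$, the base $n = 1$ being trivial since then $|\Su| = 2$. For the inductive step, fix any term $x \in \supp(\alp)$ and set $\alp' := \alp x^{-1}$; the sequence $\alp'$ is still zero-sum-free and the inductive hypothesis applies to it. The pivotal observation, valid for every choice of $x$, is that $\sig(\alp) \notin \Su[\alp']$: an identity $\sig(\alp) = \sig(\beta)$ with $\beta \mid \alp'$ would make $\alp\beta^{-1}$ a nonempty zero-sum subsequence of $\alp$, contradicting zero-sum-freeness. Consequently $|\Su| \ge |\Su[\alp']| + 1$ in full generality.

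If $\h(\alp)$ is realized by a unique term of $\alp$, I would take $x$ to be that term; then $\h(\alp') = \h(\alp) - 1$, the inductive hypothesis reads $|\Su[\alp']| \ge 2(n-1) - (\h(\alp) - 1) + 1 = 2n - \h(\alp)$, and combining with the $+1$ gain closes the induction. The hard part will be the case in which $\h(\alp)$ is attained by at least two distinct terms: here $\h(\alp') = \h(\alp)$ regardless of $x$, the induction only delivers $|\Su[\alp']| \ge 2n - \h(\alp) - 1$, and a \emph{second} element of $\Su \setminus \Su[\alp']$ beyond $\sig(\alp)$ must be exhibited. For poor choices of $x$ the gain $|\Su| - |\Su[\alp']|$ really can be only $1$ (for instance $\alp = 1^2 \cdot 2^2$ in $\mathbb{Z}/7\mathbb{Z}$ with $x = 1$), so the argument must single out a favourable removal $x$ in tandem with a partner maximum-multiplicity term $y$ and use the spare copies of $y$ to realize a second new sum such as $\sig(\alp y^{-1}) = \sig(\alp) - y$. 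Carrying out this finer analysis is the technical heart of the proofs in \refb{sf} and \refb{gh}, which I would invoke rather than reconstruct.
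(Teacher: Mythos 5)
The paper offers no proof of Lemma~\refl{sf} at all: the statement is quoted from \refb{sf} (see also \refb{gh}), so there is no in-paper argument to compare your proposal against. What you do write is correct as far as it goes. The reduction of the right-hand inequality to $|\alp|\ge\h(\alp)+|\supp(\alp)|-1$ is fine; the observation that $\sig(\alp)\notin\Su[\alp x^{-1}]$ for zero-sum-free $\alp$ (hence $|\Su|\ge|\Su[\alp x^{-1}]|+1$ for every term $x$) is correct; the induction does close when the maximal multiplicity is attained by a unique term; and your $\mathbb{Z}/7\mathbb{Z}$ example $\alp=1^2\cdot2^2$ with $x=1$ is verified --- the removal gains exactly one new subsequence sum, so the naive step really is insufficient there.

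However, the case you leave open --- at least two terms of maximal multiplicity --- is a genuine gap, not a residual technicality. It carries the full weight of the lemma: in the extreme case $\h(\alp)=1$ the claim becomes $|\Su[A]|\ge2|A|$ for a zero-sum-free set $A$ (essentially the Olson--White bound), and for such a sequence with $|\supp(\alp)|\ge2$ \emph{every} step of your induction lands in the unresolved case. Your candidate second element $\sig(\alp)-y$ is natural but not automatically outside $\Su[\alp x^{-1}]$: a coincidence $\sig(\alp)-y=\sig(\beta)$ with $\beta\mid\alp x^{-1}$ produces a nonempty zero-sum subsequence only when $\beta\mid\alp y^{-1}$, and a $\beta$ containing all copies of $y$ escapes the argument; moreover, one must still show that a \emph{favourable} choice of $x$ always exists. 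Since you explicitly propose to ``invoke rather than reconstruct'' the references at precisely this point, your argument ultimately rests on the same citation that the paper uses for the entire statement; the proposal is an honest and correctly executed partial reduction, but it is not a proof.
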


\begin{lemma}[{\cite[Proof of Lemma~1]{b:ben}};
  see also {\cite[Theorem~5.3.1]{b:gh}}]\label{l:ben}
Suppose that $k$ is a positive integer, and $\alp=\alp_1\dotsb\alp_k$ is a
factorization of a finite, zero-sum-free sequence $\alp$ over an abelian
group. Then
  $$ |\Su| \ge |\Su[\alp_1]|\longp|\Su[\alp_k]|-(k-1). $$
\end{lemma}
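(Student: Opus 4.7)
The plan is to follow the strategy sketched in the introduction. Starting from $\alpha$, iterate the transfer operation of Section~\refs{stab} to produce a sequence $\alpha=\alpha_0,\alpha_1,\dotsc$ of zero-sum-free sequences with $\supp(\alpha_i)=\supp(\alpha)$ and $|\Su[\alpha_i]|$ non-increasing, chosen so that the pair $(|\alpha_i|,|\alpha_i|_2)$ strictly increases lexicographically at every step. Since Lemma~\refl{sf} forces $|\alpha_i|\le|\Su[\alpha_i]|\le|\Su|$, the process terminates at a stable zero-sum-free sequence $\beta$ with $|\beta|\ge n$, $\supp(\beta)=\supp(\alpha)$, and $|\Su[\beta]|\le|\Su|<Cn-C^2\sqrt{6n}$.

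Let $a$ be a term of $\beta$ of largest multiplicity $m:=\h(\beta)$ and write $\beta=a^m\gamma$. The heart of the argument, and what I expect to be the main obstacle, is to show that the stability of $\beta$ forces $|\gamma|<C\sqrt{6n}$. The most transparent route is via Lemma~\refl{ben} applied to $\beta=a^m\cdot\gamma$ and Lemma~\refl{sf} applied to $\gamma$, which combine to give
\[
|\Su[\beta]|\ge m+|\Su[\gamma]|\ge m+2|\gamma|-\h(\gamma)+1=2|\beta|-m-\h(\gamma)+1,
\]
but this inequality alone is not strong enough, and one must exploit stability to bound $\h(\gamma)$ (and possibly to iterate the splitting) before concluding $m=|\beta|-|\gamma|>n-C\sqrt{6n}$. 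Setting $h:=m-1$ gives $h>n-C\sqrt{6n}$ and $\nu_\beta(a)\ge h+1$; because $\beta$ is zero-sum-free, the sums $a,2a,\dotsc,ma$ are pairwise distinct and nonzero, so $\ord(a)>m\ge h+1$.

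For the inclusion $\Su[\supp(\alpha)]+P\seq\Suc$, where $P:=\{a,2a,\dotsc,ha\}$, take any $x\in\Su[\supp(\alpha)]=\Su[\supp(\beta)]$ with $x=\sigma(\alpha_0)$ for a multiplicity-free subsequence $\alpha_0$ of $\beta$, and any $t\in\{1,\dotsc,h\}$. The sequence obtained from $\alpha_0$ by adjoining $t$ copies of $a$ uses at most $h+1=m=\nu_\beta(a)$ copies of $a$ and is therefore a nonempty subsequence of $\beta$ summing to $x+ta$. Since the transfer operation is built to preserve the inclusion $\Suc[\beta]\seq\Suc$, this yields $x+ta\in\Suc$.

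Finally, build $X$ greedily: put $x_0:=0$ and, at each stage, adjoin some $x_{i+1}\in\Su[\supp(\alpha)]\setminus\bigcup_{j\le i}(x_j+P-P)$ until no such element remains. Then $\Su[\supp(\alpha)]\seq X+P-P$ by construction, and the translates $x+P$ for $x\in X$ are pairwise disjoint and lie inside $\Suc$ by the preceding paragraph, so
\[
|\Suc|\ge|X|\cdot h>|X|\cdot(n-C\sqrt{6n}).
\]
Were $|X|\ge C$, this would exceed $Cn-C^2\sqrt{6n}$, contradicting the hypothesis on $|\Su|$; hence $|X|\le C-1$. The ``in particular'' clause is then immediate from $P-P=\{-(h-1)a,\dotsc,(h-1)a\}$, since each term of $\alpha$ lies in $\Su[\supp(\alpha)]$.
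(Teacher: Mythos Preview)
Your proposal does not address the stated lemma at all. The statement you were asked to prove is Lemma~\refl{ben}, a basic additive fact: if a zero-sum-free sequence factors as $\alp=\alp_1\dotsb\alp_k$, then $|\Su|\ge|\Su[\alp_1]|\longp|\Su[\alp_k]|-(k-1)$. In the paper this lemma is not proved; it is simply quoted from~\refb{ben} and~\refb{gh} as a known result. A correct proof is short and self-contained: by induction on $k$ it reduces to $k=2$, and for $k=2$ one shows that in an abelian group the translates $\sig(\alp_1')+\Su[\alp_2]$ (as $\alp_1'$ ranges over subsequences of $\alp_1$) cover $\Su$ and any two of them, being translates of the same set, either coincide or gain at least one new element; zero-sum-freeness guarantees they cannot all coincide, so one picks up at least $|\Su[\alp_1]|-1$ new elements beyond the first translate.

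What you have written instead is an outline of the proof of Theorem~\reft{main}: iterating the transfer operation to reach a stable sequence $\beta$, extracting the high-multiplicity term $a$, verifying $\Su[\supp(\alp)]+P\seq\Suc$, and running a Ruzsa-type covering argument to bound $|X|$. That material is entirely irrelevant to Lemma~\refl{ben}; indeed, your very first displayed inequality \emph{invokes} Lemma~\refl{ben} rather than proving it. Moreover, even as a proof of Theorem~\reft{main} your sketch openly concedes the key gap (``this inequality alone is not strong enough, and one must exploit stability to bound $\h(\gamma)$''), which in the paper is handled by the separate Lemma~\refl{bounds} via Corollaries~\refc{TBL} and~\refc{PiPj}. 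So as a submission for Lemma~\refl{ben} this is simply the wrong argument for the wrong statement.
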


\begin{lemma}[{\cite[Theorem~1.1]{b:hui}}]\label{l:hui}
If A is a finite, zero-sum-free \emph{subset} of an abelian group (so that
$\h(A)=1$), then $|\Su[A]|\ge 1+\frac16\,|A|^2$.
\end{lemma}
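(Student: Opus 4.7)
I would prove the lemma by strong induction on $n=|A|$. For small $n$ the bound follows immediately from Lemma~\refl{sf}: since $A$ is a set one has $\h(A)=1$, hence $|\Su[A]|\ge 2n$, which exceeds $1+n^2/6$ for all $n\le 11$. This secures the base of the induction.

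For the inductive step I would deploy Lemma~\refl{ben}. Writing $A=A_1\cup A_2$ with $A_1\cap A_2=\est$ and $|A_1|=k$, Lemma~\refl{ben} gives
\begin{equation*}
|\Su[A]|\ge|\Su[A_1]|+|\Su[A_2]|-1.
\end{equation*}
Using Lemma~\refl{sf} on $A_1$ (so $|\Su[A_1]|\ge 2k$) and the inductive quadratic bound on $A_2$ (so $|\Su[A_2]|\ge 1+(n-k)^2/6$) yields $|\Su[A]|\ge 2k+(n-k)^2/6$, and the balanced split $k=n/2$ yields only $1+n^2/12$. Either estimate falls short by a factor of~$2$. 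The natural remedy is to iterate: partition $A$ into many blocks whose individual linear contributions through Lemma~\refl{sf} accumulate, while the total loss $(k-1)$ incurred by Lemma~\refl{ben} is kept small by judicious choice of block sizes.

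A complementary, and perhaps more flexible, route is a greedy ``peeling'' argument. Order the elements as $A=\{a_1\longc a_n\}$, set $A_k:=\{a_1\longc a_k\}$, and study the increments $\Delta_k:=|\Su[A_{k+1}]|-|\Su[A_k]|$. One always has $\Delta_k\ge 1$: indeed $\Su[A_{k+1}]=\Su[A_k]\cup(a_{k+1}+\Su[A_k])$, and if the translate were contained in $\Su[A_k]$ it would equal it (same cardinality), forcing $\Su[A_k]$ to be a union of cosets of $\<a_{k+1}\>$; this would yield some $B\seq A_k$ with $\sigma(B)=-a_{k+1}$, whence $B\cup\{a_{k+1}\}$ would be a nonempty zero-sum subset of $A$, a contradiction. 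The plan is to amplify the trivial bound $\Delta_k\ge 1$ to $\sum_{k=0}^{n-1}\Delta_k\ge n^2/6$, either by averaging over all orderings of $A$ or by showing that at most boundedly many elements $a\in A\stm A_k$ can fail to produce many new sums when appended. Telescoping $|\Su[A]|=1+\sum\Delta_k$ would then deliver the claim.

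The principal obstacle I anticipate is precisely this amplification step: one must convert the global hypothesis that $A$ is a set (rather than a multi-set) into a quantitative, quadratic total increment. Without this hypothesis the linear bound of Lemma~\refl{sf} is sharp (take a constant sequence), and no quadratic improvement is possible; the heart of the argument must therefore lie in leveraging the distinctness of the $a_i$ at every step of the peeling or decomposition.
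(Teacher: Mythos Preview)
The paper does not prove this lemma; it is quoted verbatim from \cite[Theorem~1.1]{b:hui} and used as a black box. There is therefore no in-paper argument to compare your attempt against.

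As you yourself concede, your proposal does not close. The decomposition route via Lemmas~\refl{ben} and~\refl{sf} is intrinsically linear: splitting $A$ into $m$ equal blocks and applying Lemma~\refl{sf} to each yields only $|\Su[A]|\ge 2n-m+1$, while applying the inductive quadratic bound to each block yields $1+n^2/(6m)$, which worsens as $m$ grows. No allocation of block sizes recovers the missing factor, because Lemma~\refl{ben} is subadditive and cannot manufacture a quadratic total from linear or sub-quadratic pieces. The peeling route has the same defect: reaching $\sum_k\Delta_k\ge n^2/6$ demands that $\Delta_k$ be of order~$k$ on average, and neither the trivial $\Delta_k\ge 1$ nor the bare distinctness of the $a_i$ supplies this. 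What is missing is exactly the ``amplification step'' you flag, and that step is the entire content of the result: the argument in \cite{b:hui} is genuinely new and does not reduce to iterating Lemmas~\refl{sf} and~\refl{ben}. Your outline identifies the difficulty correctly but does not contain the idea that resolves it.
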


\section{The transfer operation}\label{s:stab}

A simple construction presented in Section~\refs{proof} after the statement
of Lemma~\refl{bounds} shows that a zero-sum-free sequence $\alp$ with about
$C|\alp|$ subsequence sums may fail to contain a term of multiplicity larger
than, roughly, $|\alp|/C$. The machinery of transfer operation developed in
this section supplies a sort of a poor man's compensation for the lack of
high-multiplicity terms.

Throughout this section, $\alp$ is a finite sequence of elements of an
abelian group of length $n:=|\alp|$, written as $\alp=a_1^{m_1}\dotsc
a_s^{m_s}$ where $s\ge 2$, the terms $a_1\longc a_s$ are pairwise distinct,
and the exponents $m_1\longc m_s$ are positive integers.

\begin{lemma}\label{l:prec}
Suppose that $i,j\in[1,s],\ i\ne j$, and $v<m_j$ is a positive integer.
Consider the sequence $\alp'=a_1^{m_1'}\dotsb\alp_s^{m_s'}$ defined by
$m'_i=m_i+v$, $m_j'=m_j-v$, and $m_k'=m_k$ for $k\in[1,s]\stm\{i,j\}$. Then
$|\alp'|_2>|\alp|_2$ if and only if $v>m_j-m_i$.
\end{lemma}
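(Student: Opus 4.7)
The plan is to prove this by direct computation, since the definition of $|\cdot|_2$ as a sum of squares reduces the statement to comparing two specific quadratic expressions. All multiplicities other than those at positions $i$ and $j$ coincide between $\alp$ and $\alp'$, so the difference $|\alp'|_2 - |\alp|_2$ collapses to a telescoping expression involving only $m_i$, $m_j$, and $v$.

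First I would write
\[
  |\alp'|_2 - |\alp|_2 = (m_i + v)^2 + (m_j - v)^2 - m_i^2 - m_j^2,
\]
then expand and simplify to obtain
\[
  |\alp'|_2 - |\alp|_2 = 2v^2 + 2v(m_i - m_j) = 2v\bigl(v - (m_j - m_i)\bigr).
\]
Since $v$ is a positive integer, the factor $2v$ is strictly positive, so the sign of $|\alp'|_2 - |\alp|_2$ is governed entirely by the sign of $v - (m_j - m_i)$. Hence $|\alp'|_2 > |\alp|_2$ if and only if $v > m_j - m_i$, as claimed.

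There is really no obstacle here; the assumptions that $i \ne j$ and $v < m_j$ guarantee that $\alp'$ is a well-defined sequence (with the nonnegative exponent $m_j' = m_j - v \ge 0$ and the understanding that $a_j$ is dropped from the support if $m_j' = 0$), but they play no role in the algebraic identity itself. The lemma is thus a one-line computation, and the only thing to be careful about is recording the factorization $2v(v - (m_j - m_i))$ cleanly so that the equivalence claim is transparent.
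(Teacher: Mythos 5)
Your proof is correct and is essentially the paper's own argument: the author likewise reduces the claim to the inequality $(m_i+v)^2+(m_j-v)^2>m_i^2+m_j^2$, which your factorization $2v\bigl(v-(m_j-m_i)\bigr)$ makes explicit. (One tiny aside: since $v<m_j$ strictly, in fact $m_j'\ge 1$, so $a_j$ is never dropped from the support.)
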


\begin{proof}
It suffices to notice that the inequality $(m_i+v)^2+(m_j-v)^2>m_i^2+m_j^2$
reduces to $v>m_j-m_i$.
\end{proof}

Fix indices $i,j\in[1,s]$ with $i\ne j$ and write $\alp=a_i^{m_i}
a_j^{m_j}\rho$ where $\rho$ is a sequence with $a_i,a_j\notin\supp(\rho)$. If
there exist positive integers $u\le m_i+1$ and $v\le m_j-1$ such that
$ua_i=va_j$, and either $u>v$, or $u=v>m_j-m_i$, then we say that the
sequence $a_i^{m_i+u}a_j^{m_j-v}\rho$ is obtained from the original sequence
$\alp$ by the transfer operation. We also say in this case that the (ordered)
pair $(i,j)$ is \emph{unstable}. If $u$ and $v$ with the specified properties
do not exist, then we say that $(i,j)$ is \emph{stable}. If both $(i,j)$ and
$(j,i)$ are stable pairs, then we say that the set $\{i,j\}$ is stable.
Finally, we say that the original sequence $\alp$ is stable if all ordered
pairs $(i,j)$ are stable; equivalently, if all two-element subsets
$\{i,j\}\seq[1,s]$ are stable.

We remark that the inequality $v\le m_j-1$ ensures that $\supp(\alp)$ is not
affected by the transfer operation, while the condition $u\le m_i+1$ is
needed to guarantee that the set $\Suc$ of subsequence sums does not
increase; see Lemma~\refl{swapprop} below.

\begin{lemma}\label{l:ijji}
Suppose that $i,j\in[1,s],\ i\ne j$.
\begin{itemize}
\item[(i)] If $m_i\ge m_j$, then for the pair $(i,j)$ to be unstable it is
    necessary and sufficient that there exist positive integers $u\le
    m_i+1$ and $v\le m_j-1$ such that $ua_i=va_j$ and $u\ge v$.
\item[(ii)] For the pair $(j,i)$ to be unstable it is necessary and
    sufficient that there exist positive integers $u\le m_i-1$ and $v\le
    m_j+1$ such that $ua_i=va_j$ and either $u<v$, or $u=v>m_i-m_j$.
\end{itemize}
\end{lemma}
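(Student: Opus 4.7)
The plan is to unfold the definition of an unstable pair and match it, separately in each part, to the stated reformulation. Recall that $(i,j)$ is unstable precisely when there exist positive integers $u\le m_i+1$ and $v\le m_j-1$ with $ua_i=va_j$, such that \emph{either} $u>v$, \emph{or} $u=v>m_j-m_i$.

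For part (i) I would invoke the hypothesis $m_i\ge m_j$, which gives $m_j-m_i\le 0$. Then every pair of positive integers $u=v$ automatically satisfies $u=v>m_j-m_i$, so the alternative ``$u>v$ or $u=v>m_j-m_i$'' collapses to ``$u>v$ or $u=v$'', i.e.\ to $u\ge v$, yielding the claim.

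For part (ii) I would apply the definition directly to the ordered pair $(j,i)$, which produces the following characterization: $(j,i)$ is unstable iff there exist positive integers $u'\le m_j+1$ and $v'\le m_i-1$ with $u'a_j=v'a_i$, satisfying either $u'>v'$ or $u'=v'>m_i-m_j$. Setting $u:=v'$ and $v:=u'$, so that $u$ and $v$ are the coefficients of $a_i$ and $a_j$ respectively (matching the convention adopted in (i)), one recovers precisely the condition stated in (ii): $u\le m_i-1$, $v\le m_j+1$, $ua_i=va_j$, and either $u<v$ or $u=v>m_i-m_j$.

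No real obstacle is expected; the lemma is a bookkeeping statement that simply unpacks the asymmetric definition of an unstable pair. The only point worth flagging is that the definition treats the two indices asymmetrically (the upper bound on $u$ is $m_i+1$ while that on $v$ is $m_j-1$), which is exactly what makes the inequalities appearing in (i) and (ii) look different.
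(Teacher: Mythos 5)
Your proposal is correct and follows exactly the paper's route: part~(i) is the definition with the disjunct $u=v>m_j-m_i$ made vacuously satisfied (for $u=v$) by the hypothesis $m_i\ge m_j$, and part~(ii) is the definition applied to $(j,i)$ with the roles of $u$ and $v$ interchanged. The paper states this in one sentence each; your write-up just spells out the same bookkeeping.
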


\begin{proof}
The first assertion follows directly from the definition of a stable pair.
The second assertion is the definition of an unstable pair in disguise, which
is immediately seen by interchanging $i$ and $j$ and, simultaneously, $u$ and
$v$.
\end{proof}

\begin{corollary}\label{c:setstab}
Suppose that $i,j\in[1,s],\ i\ne j$, and $m_i\ge m_j$. If there exist
positive integers $u\le m_i+1$ and $v\le m_j-1$ such that $ua_i=va_j$, then
$\{i,j\}$ is unstable.
\end{corollary}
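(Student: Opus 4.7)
The plan is to apply Lemma~\refl{ijji} directly, splitting on whether $u\ge v$ or $u<v$. Since $\{i,j\}$ is unstable by definition as soon as at least one of the ordered pairs $(i,j)$ or $(j,i)$ is unstable, it suffices to show that in either case one of these two pairs meets the criterion of Lemma~\refl{ijji}.

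First I would note that by hypothesis the integers $u,v$ are positive and satisfy $u\le m_i+1$, $v\le m_j-1$, and $ua_i=va_j$. If $u\ge v$, then all conditions in Lemma~\refl{ijji}(i) are met (using $m_i\ge m_j$), so the pair $(i,j)$ is unstable, and we are done. The slightly less immediate case is $u<v$, where I would aim to apply Lemma~\refl{ijji}(ii) to conclude that $(j,i)$ is unstable.

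For this, I need to verify the bounds $u\le m_i-1$ and $v\le m_j+1$, with $u,v$ positive and $u<v$. The bound $v\le m_j+1$ is obvious from $v\le m_j-1$. For $u\le m_i-1$, observe that from $1\le v\le m_j-1$ we must have $m_j\ge 2$, and then $u\le v-1\le m_j-2\le m_i-2$ using $m_i\ge m_j$; this is stronger than needed. The condition $u<v$ is given, so Lemma~\refl{ijji}(ii) applies and $(j,i)$ is unstable.

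I don't expect any real obstacle here, since the statement is essentially a bookkeeping consequence of Lemma~\refl{ijji}; the only point requiring a moment's care is ensuring that $m_j\ge 2$ in the case $u<v$ so that the chain $u\le v-1\le m_j-2\le m_i-2$ is legitimate (in particular, that there is no degenerate situation with $m_j=1$ producing an empty range for $v$).
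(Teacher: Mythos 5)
Your proof is correct and follows essentially the same route as the paper's: case split on $u\ge v$ versus $u<v$, applying Lemma~\refl{ijji}(i) in the first case and Lemma~\refl{ijji}(ii) to the reversed pair $(j,i)$ in the second, with the same chain $u<v\le m_j-1\le m_i-1$ justifying the bound on $u$. No gaps.
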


\begin{proof}
Suppose that $u\le m_i+1$ and $v\le m_j-1$ satisfy $ua_i=va_j$. If $u\ge v$,
then $(i,j)$ is unstable by Lemma~\refl{ijji} (i). If $u<v$ then $u< m_j-1\le
m_i-1$ and $(j,i)$ is unstable by Lemma~\refl{ijji} (ii).
\end{proof}

For $j\in[1,s]$, we write $P_j:=\{0,a_j,2a_j\longc m_ja_j\}$ and
$P_j':=\{0,a_j,2a_j\longc(m_j-1)a_j\}$. Notice that if $\alp$ is
zero-sum-free, then $|P_j|=m_j$.
\begin{corollary}\label{c:zeroint}
Suppose that $i,j\in[1,s],\ i\ne j$, and $m_i\ge m_j$. If $\{i,j\}$ is
stable, then $P_i\cap P_j'=\{0\}$.
\end{corollary}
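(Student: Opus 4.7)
The inclusion $\{0\}\seq P_i\cap P_j'$ is automatic since both $P_i$ and $P_j'$ contain $0$ by definition, so the substance of the claim is the reverse inclusion. My plan is to establish it by contraposition, appealing directly to Corollary~\refc{setstab}.

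Suppose, toward a contradiction, that $P_i\cap P_j'$ contains a nonzero element $x$. Writing $x=ua_i=va_j$ with $0\le u\le m_i$ and $0\le v\le m_j-1$, I first observe that if either $u=0$ or $v=0$, then the corresponding side would force $x=0$, contradicting $x\ne 0$. Hence $u$ and $v$ are both positive integers, and they satisfy $u\le m_i\le m_i+1$, $v\le m_j-1$, and $ua_i=va_j$. Since $m_i\ge m_j$ by hypothesis, Corollary~\refc{setstab} applies and yields that $\{i,j\}$ is unstable, contradicting the stability assumption.

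There is no real obstacle here; the only thing to notice is that a nonzero element of the intersection automatically has both coefficients in the positive range demanded by Corollary~\refc{setstab}. In effect the present corollary is just a geometric reformulation of \refc{setstab} in the language of the arithmetic progressions $P_i$ and $P_j'$.
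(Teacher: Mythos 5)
Your proof is correct and follows exactly the paper's own argument: take a nonzero common element, write it as $ua_i=va_j$ with positive $u\le m_i$ and $v\le m_j-1$, and invoke Corollary~\refc{setstab} to contradict stability. The only difference is that you spell out why $u$ and $v$ must be positive, which the paper leaves implicit.
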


\begin{proof}
If $P_i$ and $P_j'$ share a common element $g\ne0$, then there exist positive
integers $u\le m_i$ and $v\le m_j-1$ such that $g=ua_i$ and $g=va_j$.
Therefore $ua_i=va_j$ and $\{i,j\}$ is unstable by Corollary~\refc{setstab}.
\end{proof}

\begin{corollary}\label{c:PiPj}
Suppose that $\alp$ is zero-sum-free, and that $i,j\in[1,s],\ i\ne j$, and
$m_i\ge m_j$. If $\{i,j\}$ is stable, then $|P_i+P_j'|=(m_i+1)m_j$ and,
consequently, $|\Su|\ge(m_i+1)m_j$.
\end{corollary}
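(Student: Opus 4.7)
My plan is to establish injectivity of the map $(u,v)\mapsto ua_i+va_j$ on $[0,m_i]\times[0,m_j-1]$. Once this is in hand, the inclusion $P_i+P_j'\seq\Su$ — which holds because each such sum equals $\sig(a_i^u a_j^v)$ for a subsequence $a_i^u a_j^v\mid\alp$ — gives $|P_i+P_j'|=(m_i+1)m_j$, and the stated lower bound on $|\Su|$ follows at once.

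For injectivity, I would suppose $ua_i+va_j=u'a_i+v'a_j$ for two pairs in the rectangle, set $\delta_i:=u-u'$ and $\delta_j:=v-v'$, and aim to conclude $(\delta_i,\delta_j)=(0,0)$ from the relation $\delta_ia_i+\delta_ja_j=0$ subject to $|\delta_i|\le m_i$ and $|\delta_j|\le m_j-1$. The natural case split is on whether $\delta_i$ and $\delta_j$ have strictly opposite signs. If not, then $a_i^{|\delta_i|}a_j^{|\delta_j|}$ is a subsequence of $\alp$ with zero sum, and is nonempty as soon as $(\delta_i,\delta_j)\ne(0,0)$, contradicting zero-sum-freeness. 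If the signs are strictly opposite, then $|\delta_i|a_i=|\delta_j|a_j$ exhibits a nonzero common element of $P_i$ and $P_j'$, and I would invoke Corollary~\refc{zeroint} (available because $m_i\ge m_j$ and $\{i,j\}$ is stable) to reach a contradiction.

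The one point worth flagging is why the statement uses $P_j'$ rather than $P_j$: the bound $|\delta_j|\le m_j-1$ on the second-coordinate difference — forced by the range $v,v'\in[0,m_j-1]$ — is precisely what lands $|\delta_j|a_j$ in $P_j'$ in the opposite-sign case, where Corollary~\refc{zeroint} is applicable. With this observation in place there is no genuine obstacle; the verification is a short bookkeeping check.
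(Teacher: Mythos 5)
Your proof is correct and follows essentially the same route as the paper's: both reduce the claim to the pairwise distinctness of the sums $ua_i+va_j$ over the rectangle $[0,m_i]\times[0,m_j-1]$, dispose of the equal-sign (or one-zero) case via zero-sum-freeness, and rule out the opposite-sign case via stability. The only cosmetic difference is that you invoke Corollary~\refc{zeroint} where the paper cites Corollary~\refc{setstab} directly, and the former is an immediate repackaging of the latter.
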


\begin{proof}
It suffices to show that all sums $ua_i+va_j$ with $u\in[0,m_i]$,
$v\in[0,m_j-1]$ are pairwise distinct. To this end, suppose that
$u'a_i+v'a_j=u''a_i+v''a_j$ with $u',u''\in[0,m_i]$ and $v',v''\in[0,m_j-1]$.
Then, letting $u:=u''-u'$ and $v:=v'-v''$, we have $ua_i=va_j$ where $|u|\le
m_i$ and $|v|\le m_j-1$. If $u$ and $v$ are distinct from $0$, then they are
of the same sign since $\alp$ is zero-sum-free; without loss of generality,
both are positive, and then the equality $ua_i=va_j$ contradicts
Corollary~\refc{setstab}.
\end{proof}

\begin{corollary}\label{c:TBL}
Suppose that $\alp$ is zero-sum-free, and that $i,j\in[1,s]$, $i\ne j$,
$m_i=\h_1(\alp)$, and $m_j=\h_2(\alp)$. If $\{i,j\}$ is stable, then
$|\Su|-2n\ge(m_i-2)(m_j-2)-2$.
\end{corollary}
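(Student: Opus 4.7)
The plan is to bound $|\Su|$ from below by applying Lemma \refl{ben} to a carefully chosen two-factor splitting of $\alp$, using Corollary \refc{PiPj} to handle the first factor and Lemma \refl{sf} to handle the second. Concretely, I would factor
$$ \alp = \bigl(a_i^{m_i} a_j^{m_j-1}\bigr) \cdot \bigl(a_j \rho\bigr), $$
where $\rho := \prod_{k\in[1,s]\stm\{i,j\}} a_k^{m_k}$ gathers the terms of $\alp$ other than $a_i$ and $a_j$ (empty if $s=2$).

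The subsequence-sum set of the first factor equals $P_i + P_j'$ in the notation introduced before Corollary \refc{zeroint}. Since $m_i=\h_1(\alp)\ge\h_2(\alp)=m_j$ and $\{i,j\}$ is stable, Corollary \refc{PiPj} gives
$|\Su[a_i^{m_i} a_j^{m_j-1}]|=(m_i+1)m_j$ exactly. For the second factor, $a_j\rho$ is a zero-sum-free subsequence of $\alp$ of length $n-m_i-m_j+1$, and its maximum multiplicity is at most $m_j$: the term $a_j$ appears only once, while every other term has multiplicity $\h_3(\alp)\le m_j$ (with the case $s=2$ handled trivially since then $a_j\rho=a_j$ has $\h=1\le m_j$). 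Lemma \refl{sf} therefore yields
$$ |\Su[a_j\rho]| \ge 2(n-m_i-m_j+1) - m_j + 1. $$
Combining these two estimates via Lemma \refl{ben} and simplifying gives $|\Su|\ge 2n + m_i m_j - 2m_i - 2m_j + 2$, which rearranges to the claimed $|\Su|-2n\ge(m_i-2)(m_j-2)-2$.

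There is no real obstacle beyond choosing the factorization correctly; the computation itself is routine. The symmetric-looking split $\alp=(a_i^{m_i}a_j^{m_j})\cdot\rho$ falls short by one unit because Corollary \refc{PiPj} controls only the asymmetric sumset $P_i+P_j'$ rather than the full $P_i+P_j$. Moving a single copy of $a_j$ from the first factor into the second repairs this gap: the first factor's contribution is unchanged at $(m_i+1)m_j$, while lengthening the input of Lemma \refl{sf} by one boosts the length-based term by $2$, exactly accounting for the missing amount.
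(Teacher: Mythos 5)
Your proof is correct and is essentially identical to the paper's: the paper uses the very same factorization $\alp=(a_i^{m_i}a_j^{m_j-1})\cdot\rho$ (your $a_j\rho$ is its $\rho$), applies Corollary~\refc{PiPj} to the first factor, Lemma~\refl{sf} with $\h(\rho)\le m_j$ to the second, and combines them via Lemma~\refl{ben}. Your closing observation about why one copy of $a_j$ must be moved into the second factor correctly identifies the one nontrivial choice in the argument.
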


\begin{proof}
Let $\rho$ be the sequence defined by $\alp=a_i^{m_i}a_j^{m_j-1}\rho$. By
Corollary~\refc{PiPj} and Lemmas~\refl{ben} and~\refl{sf}, and in view of
$\h(\rho)\le m_j$,
\begin{align*}
 |\Su| &\ge |\Su[a_i^{m_i}a_j^{m_j-1}]| + |\Su[\rho]| - 1 \\
       &\ge |P_i+P_j'| + (2(n-m_i-(m_j-1)) - m_j+1) - 1 \\
       &=   (m_i+1)m_j + 2n-2m_i-3m_j+ 2 \\
       &=    m_im_j - 2m_i - 2m_j + 2 + 2n \\
       &=   (m_i-2)(m_j-2) - 2 + 2n.
\end{align*}
\end{proof}

\begin{lemma}\label{l:swapprop}
If $\alp'$ is obtained from $\alp$ by a series of subsequent transfer
operations, then
  $$ \Suc[\alp']\seq\Suc,\ \supp(\alp')=\supp(\alp), $$
and
  $$ \text{either $|\alp'|>|\alp|$, or $|\alp'|=|\alp|$ and
                                           $|\alp|_2<|\alp'|_2$.} $$
\end{lemma}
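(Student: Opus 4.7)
My plan is to reduce the claim to the case of a single transfer operation by induction on the number of operations applied. So assume $\alp' = a_i^{m_i+u} a_j^{m_j-v} \rho$ is obtained from $\alp = a_i^{m_i} a_j^{m_j} \rho$ by one such step, with $1 \le u \le m_i+1$, $1 \le v \le m_j-1$, $u a_i = v a_j$, and either $u > v$ or $u = v > m_j - m_i$. The support assertion is immediate: $v \le m_j - 1$ ensures that $a_j$ survives, $a_i$ is retained with a higher multiplicity, and no new term is created. For the length-versus-$|\cdot|_2$ dichotomy, note $|\alp'| - |\alp| = u - v \ge 0$; the case $u > v$ yields $|\alp'| > |\alp|$ directly, while $u = v$ forces $u = v > m_j - m_i$, so Lemma \refl{prec} delivers $|\alp'|_2 > |\alp|_2$.

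The heart of the lemma is the inclusion $\Suc[\alp'] \seq \Suc$. Let $\beta' = a_i^p a_j^q \rho'$ be a nonempty subsequence of $\alp'$, so $\rho' \mid \rho$, $0 \le p \le m_i + u$, and $0 \le q \le m_j - v$. If $p \le m_i$, then $\beta'$ is already a nonempty subsequence of $\alp$ and there is nothing to prove. Otherwise write $p = m_i + r$ with $1 \le r \le u$ and use the relation $u a_i = v a_j$ to rewrite
$$ \sig(\beta') = (m_i + r) a_i + q a_j + \sig(\rho') = (m_i + r - u) a_i + (q + v) a_j + \sig(\rho'). $$
Since $r \le u$ and $u \le m_i + 1$, one has $0 \le m_i + r - u \le m_i$; since $v \ge 1$ and $q \le m_j - v$, one has $1 \le q + v \le m_j$. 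Thus the right-hand side is the sum of the nonempty subsequence $a_i^{m_i + r - u} a_j^{q + v} \rho'$ of $\alp$, placing $\sig(\beta')$ in $\Suc$.

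Chaining the single-step result gives the full lemma: the support equality and the set inclusion compose transitively, and since a single transfer operation never decreases the length, if some step in the series strictly increases $|\cdot|$ then the composition satisfies $|\alp'| > |\alp|$, while otherwise every step preserves the length and strictly raises $|\cdot|_2$, yielding $|\alp'|_2 > |\alp|_2$. The only delicate point is the single-step subsequence sum inclusion, where the bounds $u \le m_i + 1$ and $v \le m_j - 1$ from the definition of the transfer operation are exactly what makes the replacement argument work: weakening $u \le m_i + 1$ to $u \le m_i + 2$ would allow a negative coefficient of $a_i$ in the extremal case $r = 1$, while allowing $v = m_j$ would simultaneously remove $a_j$ from the support and violate the upper bound $q + v \le m_j$.
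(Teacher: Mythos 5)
Your proof is correct and follows essentially the same route as the paper: reduce to a single transfer operation, verify the support and length/$|\cdot|_2$ claims from the definition and Lemma~\refl{prec}, and establish $\Suc[\alp']\seq\Suc$ by the replacement $(p,q)\mapsto(p-u,q+v)$ using $ua_i=va_j$, with the bounds $u\le m_i+1$ and $v\le m_j-1$ guaranteeing the new exponents are admissible and the resulting subsequence nonempty. The only cosmetic difference is that the paper first splits off $\rho$ via a set identity before doing this substitution, whereas you carry $\rho'$ along directly.
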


\begin{proof}
Without loss of generality, we assume that $\alp'$ is obtained from $\alp$ by
just one transfer operation; say, $\alp=a_i^{m_i}a_j^{m_j}\rho$ and
$\alp'=a_i^{m_i+u}a_j^{m_j-v}\rho$ where $i,j\in[1,s]$, $i\ne j$,
$a_i,a_j\notin\supp(\rho)$, $u\in[1,m_i+1]$, $v\in[1,m_j-1]$, $ua_i=va_j$,
and either $u>v$, or $u=v>m_j-m_i$. We prove the inclusion
$\Suc[\alp']\seq\Suc$; the rest follows in a straightforward way from
Lemma~\refl{prec}.

Comparing the identities
\begin{align*}
  \Suc[a_i^{m_i}a_j^{m_j}\rho]
     &= (\Suc[a_i^{m_i}a_j^{m_j}]+\Su[\rho])
             \cup\Suc[\rho] \\
\intertext{and}
  \Suc[a_i^{m_i+u}a_j^{m_j-v}\rho]
     &= (\Suc[a_i^{m_i+u}a_j^{m_j-v})+\Su[\rho]] \cup \Suc[\rho],
\end{align*}
it suffices to show that
$\Suc[a_i^{m_i+u}a_j^{m_j-v}]\seq\Suc[a_i^{m_i}a_j^{m_j}]$.

Suppose thus that $g\in\Suc[a_i^{m_i+u}a_j^{m_j-v}]$ and write $g=xa_i+ya_j$
where $x\in[0,m_i+u]$ and $y\in[0,m_j-v]$ are not both equal to $0$. We show
that $g\in\Suc[a_i^{m_i}a_j^{m_j}]$. Indeed, the case $x\le m_i$ is trivial,
while if $x\ge m_i+1$, then $x-u\in[0,m_i]$ and $y+v\in[1,m_j]$;
consequently, $g=(x-u)a_i+(y+v)a_j\in\Suc[a_i^{m_i}a_j^{m_j}]$, as wanted.
\end{proof}

If $\alp$ and $\alp'$ are as in Lemma~\refl{swapprop} and $\alp$ is
zero-sum-free, then so is $\alp'$ in view of $\Suc[\alp']\seq\Suc$. On the
other hand, from $|\alp'|\le|\Su[\alp']|\le|\Su|$ we conclude that the
possible lengths $|\alp'|$ are uniformly bounded; therefore, in view of the
second assertion of the lemma, having applied a series of subsequent transfer
operations to any given zero-sum-free sequence, we eventually obtain a stable
zero-sum-free sequence.

\section{Proof of Theorem~\reft{main}}\label{s:proof}

We start with a lemma establishing a number of relations between the length
and the largest multiplicities of the terms of a zero-sum-free sequence
$\alp$ with a small number of subset sums; most importantly, the lemma shows
that if $\alp$ is stable, then it has a term of very large multiplicity,
while all other terms have small multiplicities.

\begin{lemma}\label{l:bounds}
Suppose that $C\ge 2$ is an integer, and $\alp$ is a zero-sum-free sequence
over an abelian group with $n:=|\alp|\ge 2$. Let
$h_1:=\h_1(\alp)$ and $h_2:=\h_2(\alp)$. If $|\Su|<Cn$, then
\begin{align}
  h_1 &> n^2/(6|\Su|) > n/(6C) \label{e:lbii}
\intertext{and}
  h_1 &> n-\sqrt{6h_2|\Su|}+h_2 > n-\sqrt{6Cnh_2}+h_2. \label{e:lbiii}
\intertext{If, moreover, $\alp$ is stable, $n\ge(4C)^3$, and
           $|\Su|<Cn-C^2\sqrt{6n}$, then}
  h_2 &\le C-1 \label{e:newb}
  \intertext{and}
  h_1 &> n-C\sqrt{6n}+1. \label{e:bounds}
\end{align}
\end{lemma}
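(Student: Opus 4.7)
My plan is to handle \refe{newb} and \refe{bounds} in turn, with \refe{newb} being the substantive step. For \refe{newb} I would argue by contradiction, assuming $h_2\ge C$. The stability of $\alp$ enables Corollary~\refc{TBL}, giving $|\Su|\ge(h_1-2)(h_2-2)+2n-2$. Feeding the lower bound on $h_1$ supplied by the already-proved \refe{lbiii} into this and simplifying yields
\[
|\Su| + (h_2-2)\sqrt{6h_2|\Su|} \;\ge\; nh_2 + (h_2-2)^2 - 2.
\]
Replacing $\sqrt{6h_2|\Su|}$ by the larger $\sqrt{6h_2Cn}$ (permitted since $|\Su|<Cn$) and rearranging reduces the desired contradiction with the hypothesis $|\Su|<Cn-C^2\sqrt{6n}$ to the numerical inequality
\[
(h_2-C)n + (h_2-2)^2 - 2 + C^2\sqrt{6n} \;>\; (h_2-2)\sqrt{6h_2Cn}
\]
for every integer $h_2$ with $C\le h_2<6C^2$; the upper bound $h_2<6C^2$ is obtained by combining \refe{lbii} (which yields $h_1>n/(6C)$) with Corollary~\refc{PiPj} (which yields $h_1h_2\le|\Su|<Cn$).

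To verify this displayed inequality I would inspect the two endpoints and appeal to unimodality in between. At $h_2=C$ it reduces after cancellation to $(C-2)^2-2+2C\sqrt{6n}>0$, plainly valid for $C\ge2$ and any $n\ge1$. Near $h_2\sim 6C^2$ the linear-in-$n$ term $(h_2-C)n$, of order $C^2n$, dominates the right-hand side of order $C^{7/2}\sqrt{n}$ precisely when $n\gtrsim 36C^3$, which is implied by $n\ge(4C)^3$. A short differentiation shows that the difference of the two sides is unimodal in $h_2$---first rising and then falling---so positivity at both endpoints propagates throughout the interval.

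Once \refe{newb} is established, \refe{bounds} follows quickly. If $h_2=0$ then $\alp=a_1^n$, whence $h_1=n$ and the claim is immediate. Otherwise $h_2\ge1$; using $h_2\le C-1$ and $|\Su|<Cn$, the estimate in \refe{lbiii} produces
\[
\sqrt{6h_2|\Su|} \;<\; \sqrt{6(C-1)Cn} \;<\; C\sqrt{6n},
\]
so that $h_1>n+h_2-C\sqrt{6n}\ge n+1-C\sqrt{6n}$. The main technical obstacle is the positivity verification in the intermediate range of $h_2$ for \refe{newb}: unimodality reduces it to two endpoint checks, and the quantitative threshold $n\ge(4C)^3$ is essential to make the right-hand endpoint come out positive.
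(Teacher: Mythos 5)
Your treatment of \refe{newb} and \refe{bounds} is essentially correct and follows the same route as the paper: assume $h_2\ge C$, invoke Corollary~\refc{TBL} (which is where stability enters), feed in \refe{lbiii}, bound the admissible range of $h_2$ by combining \refe{lbii} with Corollary~\refc{PiPj}, and finish by a concavity/unimodality argument with endpoint checks. The only cosmetic difference is that the paper first narrows the range to $C\le h_2<4C$ (by reusing $(h_1+1)h_2\le|\Su|<Cn$ after showing $h_1>n/4$) before checking endpoints, whereas you work on all of $[C,6C^2)$; your endpoint computation at $h_2\sim 6C^2$ does go through under $n\ge(4C)^3$, and the difference of the two sides is indeed concave in $h_2$ there, so positivity at the endpoints propagates. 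Your deduction of \refe{bounds} from \refe{newb} and \refe{lbiii}, including the separate $h_2=0$ case, also matches the paper.

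The genuine gap is that you never prove \refe{lbii} and \refe{lbiii}. These are conclusions of the lemma, not previously established facts, and your entire argument for \refe{newb} and \refe{bounds} rests on them. The missing idea is the factorization of $\alp$ into its level sets $A_k:=\{a\in\supp(\alp)\colon\nu_\alp(a)\ge k\}$ for $k=1\longc h_1$, so that $\alp=A_1\dotsb A_{h_1}$. Lemma~\refl{ben} then gives $|\Su|\ge|\Su[A_1]|\longp|\Su[A_{h_1}]|-h_1+1$; since each $A_k$ is a genuine \emph{set}, Lemma~\refl{hui} gives $|\Su[A_k]|\ge 1+\frac16|A_k|^2$; and Cauchy--Schwarz yields $|A_1|^2\longp|A_{h_1}|^2\ge n^2/h_1$, which is \refe{lbii}. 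For \refe{lbiii} one truncates the sum at $k=h_2$, using that $|A_1|\longp|A_{h_2}|=n-h_1+h_2$ when $h_2<h_1$. This layer decomposition combined with the quadratic bound of Lemma~\refl{hui} is precisely where the nonlinear input of the whole paper enters, so it cannot be treated as ``already proved.''
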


We remark that only the last estimate of the lemma is used outside of the
lemma itself; however, we believe that the first three estimates can be
useful elsewhere, and for this reason we have included them into the
statement.

Interestingly, the bound~\refe{lbii} is best possible save for the
coefficient $1/6$. To see this, fix integer numbers $k,s\ge 1$ and consider
the sequence $\alp:=(g\cdot2g\dotsb sg)^k$ where $g$ is a group element of
sufficiently large order. We have $n:=|\alp|=ks$, $\h(\alp)=k$, and
  $$ |\Su| = 1 + \frac12s(s+1)\,k = \frac12(s+1)n + 1. $$
Choosing $s:=2(C-1)$ we therefore get $|\Su|<Cn$ while
$\h(\alp)=n/s=n/(2C-2)$.

\begin{proof}[Proof of Lemma~\refl{bounds}]
For an integer $k\in[1,h_1]$, let $A_k$ be the set of all terms of $\alp$ of
multiplicity at least $k$:
  $$ A_k := \{a\in\supp(\alp)\colon \nu_\alp(a)\ge k\}; $$
thus, $\supp(\alp)=A_1\supseteq A_2\supseteq\dotsb$. Observing that $\alp$
factors as $\alp=A_1\dotsb A_{h_1}$, by Lemma~\refl{ben} we get
  $$ |\Su| \ge |\Su[A_1]|\longp|\Su[A_{h_1}]|-h_1+1. $$
Since each of $A_1\longc A_{h_1}$ has all its elements pairwise distinct,
Lemma~\refl{hui} applies to give $|\Su[A_i]|\ge\frac16|A_i|^2+1$,
$i\in[1,h_1]$. Hence,
 $$ |\Su| > \frac16\,(|A_1|^2\longp|A_{h_1}|^2)
            \ge \frac1{6h_1} (|A_1|\longp|A_{h_1}|)^2 = \frac{n^2}{6h_1}, $$
which proves~\refe{lbii}.

To prove~\refe{lbiii} we notice that if $h_2=h_1$, then the first inequality
of~\refe{lbiii} is identical to the first inequality of~\refe{lbii}, while
otherwise there is a unique element $a\in\supp(\alp)$ of multiplicity $h_1$,
and we have $A_{h_2+1}\longe A_{h_1}=\{a\}$. Therefore, in the latter case,
arguing as above,
\begin{align*}
  |\Su| &> \frac16\,(|A_1|^2\longp|A_{h_2}|^2) \\
        &\ge \frac1{6h_2}\,(|A_1|\longp|A_{h_2}|)^2  \\
        &= \frac1{6h_2}\,(n-h_1+h_2)^2;
\end{align*}
this proves~\refe{lbiii}.

We now assume that $\alp$ is stable and $n\ge(4C)^3$, and prove~\refe{newb}
and~\refe{bounds}. We notice that, in view of~\refe{lbiii}, if~\refe{newb}
holds true, then so does~\refe{bounds}, and we thus assume that~\refe{newb}
fails to hold; that is, $h_2\ge C$.


By Corollaries~\refc{TBL} and~\refc{PiPj} we have
\begin{align}
  |\Su|-2n   &\ge (h_1-2)(h_2-2) - 2 \label{e:hh_0}
  \intertext{and}
  (h_1+1)h_2 &\le |\Su| < Cn. \label{e:hh_2}
\end{align}
Combining \refe{lbii} and~\refe{hh_2} we obtain $h_2<6C^2$, and then
  $$ h_1>n-\sqrt{36C^3n}>\frac14\,n $$
by~\refe{lbiii} and the assumption $n\ge(4C)^3$. Reusing~\refe{hh_2}, we now
get $h_2<4C$. On the other hand, from~\refe{hh_0} and~\refe{lbiii},
\begin{equation}\label{e:night}
  |\Su| - 2n \ge (n-\sqrt{6Cnh_2}+h_2-2)(h_2-2) - 2.
\end{equation}
Considering $h_2$ as a variable ranging from $C$ to $4C$, the right-hand side
is a concave function of $h_2$ attaining at $h_2=C$ the value
\begin{align*}
  (n-C&\sqrt{6n} + C - 2)(C-2)-2 \\
        &\ge Cn - 2n - C(C-2)\sqrt{6n} -2 \\
        &>  Cn-C^2\sqrt{6n} - 2n, \\
        &> |\Su| - 2n
\end{align*}
and at $h_2=4C$ the value
\begin{align*}
  (n-2C&\sqrt{6n}+4C-2)(4C-2) - 2 \\
        &\ge 4Cn-8C^2\sqrt{6n} -2n \\
        &> Cn - 2n \\
        &> |\Su| - 2n.
\end{align*}
This shows that the right-hand side of~\refe{night} is larger than the
left-hand side for all $C\le h_2\le 4C$, a contradiction completing the proof
of the lemma.
\end{proof}

We are eventually ready to prove Theorem~\reft{main}.

Suppose that $C\ge 2$ is an integer, and $\alp$ is a zero-sum-free sequence
of length $n:=|\alp|\ge(4C)^3$ satisfying $|\Su|<Cn-C^2\sqrt{6n}$. We want to
show that there are an integer $n-C\sqrt{6n}<h<n$, an element
$a\in\supp(\alp)$ of order $\ord(a)>h+1$, and a set $X\seq\Su[\supp(\alp)]$
with $0\in X$ and $|X|\le C-1$, such that $\Su[\supp(\alp)]+P\seq\Suc$ and
$\Su[\supp(\alp)]\seq X+P-P$, where $P=\{a,2a\longc ha\}$.

Consider a stable sequence $\alp'$ obtained from $\alp$ by a series of
transfer operations. By Lemma~\refl{swapprop} we have $\Suc[\alp']\seq\Suc$,
$\supp(\alp')=\supp(\alp)$, and $n':=|\alp'|\ge n$; consequently, $\alp'$ is
zero-sum-free, and
  $$ |\Su[\alp']| \le |\Su| < Cn-C^2\sqrt{6n} \le Cn'-C^2\sqrt{6n'}. $$
Notice also that $n'\ge n\ge(4C)^3$. Let $h':=\h(\alp')$ be the largest
multiplicity of a term of $\alp'$. We define $h:=h'-1$ and choose $a$ to be a
term of $\alp'$ of multiplicity $\nu_{\alp'}(a)=h'$. Thus,
 $h>n'-C\sqrt{6n'}\ge n-C\sqrt{6n}$ by~\refe{bounds}, and $\ord(a)>h'=h+1$
since $\alp'$ is zero-sum-free.

The first assertion of the theorem is now immediate by observing that $\alp'$
contains $a^h\cdot\supp(\alp')$ as a subsequence, whence
  $$ \Su[\supp(\alp)]+P = \Su[\supp(\alp')]+P \seq \Suc[\alp']\seq\Suc. $$

To prove the second assertion we essentially apply the \emph{Ruzsa covering
lemma} (\refb{ru}, see also~\cite[Lemma~2.14]{b:tv}). Let $X$ be a maximal
subset of $\Su[\supp(\alp)]=\Su[\supp(\alp')]$ such that $0\in X$ and the
translates $x+P$ with $x\in X$ are pairwise disjoint. Then
  $$ |X||P| = |X+P| \le |\Su[\alp']| \le |\Su| < Cn-C^2\sqrt{6n}; $$
in view of $|P|=h>n-C\sqrt{6n}$, this gives $|X|\le C-1$.

On the other hand, by maximality of $X$, for any $g\in\Su[\supp(\alp)]$ there
exists $x\in X$ such that $(x+P)\cap(g+P)\ne\est$; that is, $g\in x+P-P$,
showing that $\Su[\supp(\alp)]\seq X+P-P$.

This completes the proof of Theorem~\reft{main}.


\bigskip

\end{document}